\newtheorem{theorem}{Theorem}[section]
\newtheorem{lemma}[theorem]{Lemma}
\theoremstyle{definition}
\newtheorem{definition}[theorem]{Definition}
\newtheorem{remark}[theorem]{Remark}
\numberwithin{equation}{section}
\DeclareMathOperator{\diam}{diam} 
\newcommand{\be}{\begin{equation}}
\newcommand{\ee}{\end{equation}}
\newcommand{\dist}{{\operatorname{dist}}}
\DeclareMathOperator{\rad}{rad}
\def\Xint#1{\mathchoice 
 {\XXint\displaystyle\textstyle{#1}}%
{\XXint\textstyle\scriptstyle{#1}}%
{\XXint\scriptstyle\scriptscriptstyle{#1}}%
 {\XXint\scriptscriptstyle\scriptscriptstyle{#1}}%
 \!\int}
\def\XXint#1#2#3{{\setbox0=\hbox{$#1{#2#3}{\int}$}
 \vcenter{\hbox{$#2#3$}}\kern-.5\wd0}}
 \def\dashint{\Xint-}
\begin{document}
\title{Lebesgue points via the Poincar\'e inequality}
\author{Nijjwal Karak}
\address{Department of Mathematics and Statistics, University of Jyv\"askyl\"a, P.O. Box 35, FI-40014, Jyv\"askyl\"a, Finland}
\email{nijjwal.n.karak@jyu.fi}
\author{Pekka Koskela}
\address{Department of Mathematics and Statistics, University of Jyv\"askyl\"a, P.O. Box 35, FI-40014, Jyv\"askyl\"a, Finland}
\email{pekka.j.koskela@jyu.fi}
\thanks{The authors were partially supported by the Academy of Finland grant number 131477}
\begin{abstract}
In this article, we show that in a $Q$-doubling space $(X,d,\mu),$ $Q>1,$ which satisfies a chain condition, if we have a $Q$-Poincar\'e inequality for a pair of functions $(u,g)$ where $g\in L^Q(X),$ then $u$ has Lebesgue points $H^h$-a.e. for $h(t)=\log^{1-Q-\epsilon}(1/t).$ We also discuss how the existence of Lebesgue points follows for $u\in W^{1,Q}(X)$ where $(X,d,\mu)$ is a complete $Q$-doubling space supporting a $Q$-Poincar\'e inequality for $Q>1.$ 
\end{abstract}
\maketitle
\indent Keywords: Lebesgue point, Poincar\'e inequality.\\
\indent 2010 Mathematics Subject Classification: 46E35, 28A78, 28A15. 
\section{Introduction}
The usual argument for obtaining the existence of Lebesgue points outside
a small set for a Sobolev function $u\in W$ goes as follows. First of all, 
Lebesgue points exist except for a set of $W$-capacity zero \cite{HKM06},
\cite{MZ97}; this is proven by approximating $u$ by continuous functions. Secondly, each set of positive Hausdorff $h$-measure, for a suitable $h,$
is of positive $W$-capacity, see Theorem 7.1 in \cite{KM72} or Theorem 5.1.13 in \cite{AH96}.\\
\indent For the usual euclidean Sobolev space $W^{1,n}(\mathbb{R}^n),$ this argument shows that,
given $\epsilon>0,$ a function $u\in W^{1,n}(\mathbb{R}^n)$ satisfies
\begin{equation} \label{differentioituvuus}
u(x)=\lim_{r\to 0}\frac{1}{|B(x,r)|}\int_{B(x,r)}u(y)\, dy
\end{equation}
outside a set $E_\epsilon$ with $H^{h}(E_\epsilon)=0,$
where $h(t)=\log^{1-n-\epsilon}(1/t).$ In fact, any non-decreasing non-negative gauge function $h$ that satisfies
\begin{equation}\label{gauge}
\int_{0}^{1}h(t)^{1/(n-1)}\, \frac{dt}{t}<\infty
\end{equation}
can be used. To be precise, a function $u\in W^{1,n}(\mathbb{R}^n)$ is a priori only defined almost everywhere with respect to the $n$-dimensional measure. The meaning of \eqref{differentioituvuus} is that the limit of integral averages of $u$ exists $H^h$-a.e. and after replacing $u$ with this
limit, we obtain a representative of $u$ for which \eqref{differentioituvuus} holds outside $E_\epsilon.$\\
\indent The above argument is very general. Let us consider a doubling metric
space $(X,d,\mu).$ Then a simple iteration argument shows that there
is an exponent $Q>0$ and a constant $C\geq 1$ so that
\begin{equation}\label{1}
\left(\frac{s}{r}\right)^Q\leq C\frac{\mu(B(x,s))}{\mu(B(a,r))}
\end{equation}
holds whenever $a\in X$, $x\in B(a,r)$ and $0<s\leq r.$ We say that $(X,d,\mu)$ is $Q$-\textit{doubling} if $(X,d,\mu)$ is a doubling metric measure space and \eqref{1} holds with the given $Q.$ Towards defining our Sobolev space, we recall that a Borel-measurable function $g\ge 0$ is an upper gradient of a measurable function $u$ provided
\begin{equation}
\vert u(\gamma(a))-u(\gamma(b))\vert\le\int_{\gamma}g\, ds
\end{equation}
for every rectifiable curve $\gamma : [a,b]\rightarrow X$ \cite{HK98}, \cite{KM98}. We define $W^{1,p}(X),$ $1\le p<\infty,$ to be the collection of all $u\in L^p(X)$ that have an upper gradient that also belongs to $L^p(X),$ see \cite{Sha00}. In order to obtain lower bounds for the capacity associated to  $W^{1,p}(X),$ it suffices to assume a suitable Poincar\'e inequality. We say that $(X,d,\mu)$ supports a $p$-Poincar\'e inequality if there exist constants $C$ and $\lambda$ such that
\begin{equation}\label{PI}
\dashint_B\vert u-u_B\vert\, d\mu\le C\diam(B)\left(\dashint_{\lambda B}g^p\, d\mu\right)^{1/p}
\end{equation}
for every open ball $B$ in $X$, for every function $u : X\rightarrow\mathbb{R}$ that is integrable on balls, and for every upper gradient $g$ of $u$ in $X.$ For simplicity, we will from now on only consider the case of a $Q$-doubling space and we will assume that $p=Q.$\\
\indent Relying on \cite{KL02}, \cite{BO05}, and \cite{KZ08} one obtains the following conclusion.\\

\noindent\textbf{Theorem A.} Let $\epsilon>0.$ Let $(X,d,\mu)$ be a complete $Q$-doubling space with $Q>1$ that supports a $Q$-Poincar\'e inequality. If $u\in W^{1,Q}(X)$, then
\begin{equation}
u(x)=\lim_{r\to 0}\frac{1}{\mu(B(x,r))}\int_{B(x,r)}u(y)\, d\mu(y)
\end{equation}
outside a set $E_\epsilon$ with $H^{h}(E_\epsilon)=0,$ where $h(t)=\log^{1-Q-\epsilon}(1/t).$\\
\\
Theorem A is not explicitly stated in literature and thus let us describe how it follows from the indicated references. First of all, \cite{KL02} together with \cite{KZ08} gives the existence of Lebesgue points capacity
almost everywhere. Next, \cite{BO05} gives the desired relation between capacity and Hausdorff measure, but under the assumption that the space supports a $1$-Poincar\'e inequality. However, an examination of the corresponding proof in \cite{BO05} shows that it actually suffices that the Poincar\'e inequality \eqref{PI} holds for each $u\in W^{1,Q}(X)$ with $p=1$ for some function $g\in L^Q(X),$ whose $Q$-norm is at most a fixed constant times the infimum of $Q$-norms
of all upper gradients of $u.$ This requirement holds by the self-improving property of Poincar\'e inequalities \cite{KZ08}, see Section 4.\\
\indent The argument in the previous paragraph requires that $(X,d)$ be complete: the self-improving property from \cite{KZ08} may fail in the non-complete setting, see \cite{Kos99}. Moreover, even in the complete case, the self-improvement may fail unless we require a $Q$-Poincar\'e inequality for all $u\in W^{1,Q}(X).$ It is then natural to inquire if these two conditions
are necessary for the conclusion of Theorem A.\\
\indent Our result gives a rather optimal conclusion.\\

\noindent\textbf{Theorem B.} Let $\epsilon>0.$ Suppose that $(X,\mu)$ is a $Q$-doubling space for some $Q>1$. Assume that $X$ satisfies a chain condition (see definition \ref{chain}) and that the $p$-Poincar\'e inequality \eqref{PI} holds for a pair of functions $(u,g)$ with $p=Q$ where $g\in L^Q$ and $u$ is integrable on balls. Then
\begin{equation}\label{leb}
u(x)=\lim_{r\to 0}\frac{1}{\mu(B(x,r))}\int_{B(x,r)}u(y)\, d\mu(y)
\end{equation}
outside a set $E_\epsilon$ with $H^{h}(E_\epsilon)=0,$ where $h(t)=\log^{1-Q-\epsilon}(1/t).$\\
\indent As in the classical setting, the meaning of \eqref{leb} is that the limit exists outside $E_{\epsilon}$ and defines a representative for which \eqref{leb} holds outside $E_{\epsilon}.$\\

\indent Since the integral in \eqref{gauge} diverges for $h(t)=\log^{1-Q}(1/t),$ the conclusion
of Theorem B is rather optimal. We do not know if one could obtain the same conclusion as in the classical euclidean setting in this generality; under the assumptions of Theorem A one actually has a full analogue. Theorem B can be viewed as a refined version of a result in \cite{Giu69} on the existence of Lebesgue points that also avoids the use of capacities.\\

A doubling space that supports a $p$-Poincar\'e inequality is necessarily connected and even bi-Lipschitz equivalent to a geodesic space, if it is complete \cite{Che99}. Since each geodesic space satisfies a chain condition, the assumption of chain condition in Theorem B is natural. One can actually obtain the existence of a limit in \eqref{leb} outside a larger exceptional set even without a chain condition, see Section 3 below. This leads to gauge functions of the type $h(t)=\log^{-Q-\epsilon}(1/t).$\\
\indent This paper is organized as follows. We explain our notation and state a couple of preliminary results in Section 2. The proof of Theorem B is given in Section 3 and the proof of Theorem A in the appendix. 



\section{Notation and preliminaries}
We assume throughout that $X=(X,d,\mu)$ is a metric measure space equipped with a metric $d$ and a Borel regular outer measure $\mu.$ We call such a $\mu$ as a measure. The Borel-regularity of the measure $\mu$ means that all Borel sets are $\mu$-measurable and that for every set $A\subset X$ there is a Borel set $D$ such that $A\subset D$ and $\mu(A)=\mu(D).$\\

We denote open balls in $X$ with center $x\in X$ and radius $0<r<\infty$ by $$B(x,r)=\{y\in X : d(y,x)<r\}.$$
If $B=B(x,r)$ is a ball, with center and radius understood, and $\lambda>0,$ we write
$$\lambda B=B(x,\lambda r).$$
With small abuse of notation we write $\rad(B)$ for the radius of a ball $B$ and we always have $$\diam(B)\leq 2\rad(B),$$
and the inequality can be strict.\\

A Borel regular measure $\mu$ on a metric space $(X,d)$ is called a \textit{doubling measure} if every ball in $X$ has positive and finite measure and there exist a constant $C_{\mu}\geq 1$ such that
\begin{equation*}
\mu(B(x,2r))\leq C_{\mu}\,\mu(B(x,r))
\end{equation*}
for each $x\in X$ and $r>0.$ We call a triple $(X,d,\mu)$ a \textit{doubling metric measure space} if $\mu$ is a doubling measure on $X.$\\ 

If $A\subset X$ is a $\mu$-measurable set with finite and positive measure, then the \textit{mean value} of a function $u\in L^1(A)$ over $A$ is
$$u_A=\dashint_A u\, d\mu=\frac{1}{\mu(A)}\int_A u\, d\mu.$$
\\
\indent A metric space is said to be \textit{geodesic} if every pair of points in the space can be joined by a curve whose length is equal to the distance between the points.\\

\indent We recall that the \textit{generalized Hausdorff $h$-measure} is defined by
\begin{equation*}
H^h(E)=\limsup_{\delta\rightarrow 0}H_{\delta}^{h}(E),
\end{equation*}
where
\begin{equation*}
H_{\delta}^{h}(E)=\inf\left\{\sum h(\diam(B_i)) : E\subset\bigcup B_i,~\diam(B_i)\leq\delta\right\},
\end{equation*}
where the dimension gauge function $h$ is required to be continuous and increasing with $h(0)=0.$ In particular, if $h(t)=t^\alpha$ with some $\alpha>0,$ then $H^h$ is the usual \textit{$\alpha$-dimensional Hausdorff measure}, denoted also by $H^{\alpha}.$ See \cite{Rog98} for more information on the generalized Hausdorff measure.\\
\indent For the convenience of reader we state here a fundamental covering lemma (for a proof see \cite[2.8.4-6]{Fed69} or \cite[Theorem 1.3.1]{Zie89}).
\begin{lemma}[5B-covering lemma]\label{cover}
Every family $\mathcal{F}$ of balls of uniformly bounded diameter in a metric space $X$ contains a pairwise disjoint subfamily $\mathcal{G}$ such that for every $B\in\mathcal{F}$ there exists $B'\in\mathcal{G}$ with $B\cap B'\neq\emptyset$ and $\diam(B)<2\diam(B').$ In particular, we have that
$$\bigcup_{B\in\mathcal{F}}B\subset\bigcup_{B\in\mathcal{G}}5B.$$
\end{lemma}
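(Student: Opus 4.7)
The plan is to construct $\mathcal{G}$ by a greedy generation-by-generation selection, using a diameter stratification so that balls we pick later cannot be too much smaller than balls they meet. Let $M=\sup\{\diam(B):B\in\mathcal{F}\}$, which is finite by hypothesis, and for $n\ge 0$ set
\[
\mathcal{F}_n=\bigl\{B\in\mathcal{F}:M/2^{n+1}<\diam(B)\le M/2^{n}\bigr\},
\]
so that $\mathcal{F}=\bigcup_{n\ge 0}\mathcal{F}_n$ and all diameters in $\mathcal{F}_n$ agree up to a factor $2$.

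Next I would build $\mathcal{G}$ inductively on $n$. Take $\mathcal{G}_0$ to be a maximal pairwise disjoint subfamily of $\mathcal{F}_0$ (existence by Zorn's lemma, since a chain of pairwise disjoint subfamilies has its union as an upper bound). Given $\mathcal{G}_0,\dots,\mathcal{G}_{n-1}$, let
\[
\mathcal{F}_n'=\Bigl\{B\in\mathcal{F}_n:B\cap B'=\emptyset\text{ for every }B'\in\textstyle\bigcup_{k<n}\mathcal{G}_k\Bigr\},
\]
and choose $\mathcal{G}_n\subset\mathcal{F}_n'$ maximal pairwise disjoint, again by Zorn. Set $\mathcal{G}=\bigcup_{n\ge 0}\mathcal{G}_n$; disjointness within each generation is built in, and across generations it is forced by the definition of $\mathcal{F}_n'$.

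The key step is the selection property. Given $B\in\mathcal{F}$, fix $n$ with $B\in\mathcal{F}_n$. If $B\in\mathcal{G}_n$ we are done with $B'=B$. Otherwise maximality of $\mathcal{G}_n$ in $\mathcal{F}_n'$ forces $B$ to meet some $B'\in\bigcup_{k\le n}\mathcal{G}_k$ (for if $B$ were disjoint from all of them, we could enlarge $\mathcal{G}_n$ by adding $B$). Since such a $B'$ lies in $\mathcal{F}_k$ for some $k\le n$, we have $\diam(B')>M/2^{k+1}\ge M/2^{n+1}\ge\diam(B)/2$, which gives $\diam(B)<2\diam(B')$.

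The $5B$-conclusion is then a one-line triangle-inequality check that I would record last: if $x\in B$, $y\in B\cap B'$, and $z$ is the center of $B'$ with radius $r'$, then
\[
d(x,z)\le d(x,y)+d(y,z)<\diam(B)+r'<2\diam(B')+r'\le 4r'+r'=5r',
\]
so $x\in 5B'$. The only real subtlety—and the place I would be most careful—is the appeal to Zorn's lemma at each generation, since $\mathcal{F}$ is an arbitrary family in a general metric space with no countability assumed; everything else is bookkeeping.
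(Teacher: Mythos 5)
The paper itself gives no proof of this lemma (it defers to Federer 2.8.4--6 and Ziemer, Theorem 1.3.1), and your greedy generation-by-generation selection with the dyadic diameter stratification is exactly the standard argument from those references: the Zorn's lemma step, the maximality argument yielding $\diam(B)<2\diam(B')$, and the closing triangle-inequality computation $d(x,z)<\diam(B)+r'<5r'$ are all correct. The only caveat, which equally afflicts the statement as written, is the degenerate case of balls of zero diameter, which fall into no stratum $\mathcal{F}_n$; one tacitly assumes all balls have positive diameter (or, as is more common, stratifies by the designated radius rather than the diameter), and this has no bearing on the use of the lemma in the paper.
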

The following lemma will be essential for the proof of Theorem B.
\begin{lemma}\label{3}
Suppose that $\{a_j\}_{j=0}^{\infty}$ is a sequence of non-negative real numbers such that $\sum_{j\geq 0}a_j<\infty$. Then
\begin{equation*}
\sum_{j\geq 0}\frac{a_j}{\left(\sum_{i\geq j}a_i\right)^{1-\delta}}<\infty\quad\text{for any}\quad 0<\delta<1.
\end{equation*}
\end{lemma}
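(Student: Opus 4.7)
The plan is to pass to tail sums. Set
$$S_j \;:=\; \sum_{i \geq j} a_i,$$
so that $\{S_j\}$ is a non-increasing sequence with $S_0 < \infty$, $S_j \to 0$, and $a_j = S_j - S_{j+1}$. We may restrict attention to those $j$ with $S_j > 0$, since as soon as $S_j = 0$ all remaining $a_i$ vanish and the tail of the series is trivially zero. The lemma then reduces to showing
$$\sum_{j \geq 0} \frac{S_j - S_{j+1}}{S_j^{\,1-\delta}} \;<\; \infty.$$

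The key observation is that the summands form a lower Riemann-type estimate for the convergent integral $\int_0^{S_0} s^{\delta - 1}\, ds = S_0^{\,\delta}/\delta$. Indeed, since $\delta - 1 < 0$, the integrand $s^{\delta-1}$ attains its minimum on $[S_{j+1}, S_j]$ at the right endpoint, giving
$$\frac{a_j}{S_j^{\,1-\delta}} \;=\; (S_j - S_{j+1})\, S_j^{\,\delta - 1} \;\leq\; \int_{S_{j+1}}^{S_j} s^{\delta - 1}\, ds \;=\; \frac{S_j^{\,\delta} - S_{j+1}^{\,\delta}}{\delta}.$$

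Summing over $j \geq 0$, the right-hand side telescopes and the tail vanishes because $S_{j+1}^{\,\delta} \to 0$ (crucially using $\delta > 0$), yielding the explicit bound
$$\sum_{j \geq 0} \frac{a_j}{S_j^{\,1-\delta}} \;\leq\; \frac{S_0^{\,\delta}}{\delta} \;=\; \frac{1}{\delta}\Big(\sum_{i \geq 0} a_i\Big)^{\!\delta}.$$

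There is no real obstacle here; the only point requiring a line of care is the bookkeeping when $a_j = 0$ or $S_j = 0$, which is handled by the remark above. Everything else is a one-line telescoping estimate, and the constant $1/\delta$ blows up as $\delta \to 0^+$, consistent with divergence at the endpoint $\delta = 0$.
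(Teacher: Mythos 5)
Your proof is correct, and it takes a genuinely different route from the paper's. The paper proceeds by Abel summation (summation by parts) applied to the partial sums $\sum_{j=0}^n a_j S_j^{\delta-1}$, where $S_j=\sum_{i\ge j}a_i$, and then controls the difference $S_{j+1}^{\delta-1}-S_j^{\delta-1}$ via Bernoulli's inequality; this yields the self-referential estimate $\Sigma\le S_0^{\delta}+(1-\delta)\Sigma$, which is rearranged to give $\Sigma\le \delta^{-1}S_0^{\delta}$. You instead compare each term directly with $\int_{S_{j+1}}^{S_j}s^{\delta-1}\,ds$, using that $s^{\delta-1}$ is decreasing so its minimum on $[S_{j+1},S_j]$ sits at $S_j$, and then telescope. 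Both arguments produce exactly the same constant $\delta^{-1}\bigl(\sum_j a_j\bigr)^{\delta}$, but yours is a one-line direct bound that sidesteps the rearrangement step of the paper (which, strictly speaking, requires first knowing that $\Sigma<\infty$, or performing the rearrangement at the level of finite partial sums). Your handling of the degenerate case $S_j=0$ is also a point the paper leaves implicit. In short: same statement, same constant, cleaner mechanism.
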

\begin{proof}
For any $n\geq 1$, we use summation by parts (Newton series) and Bernoulli's inequality to obtain
\begin{eqnarray*}
\sum_{j=0}^n\frac{a_j}{\left(\sum\limits_{i\geq j}a_i\right)^{1-\delta}} &=& \frac{1}{\left(\sum\limits_{i\geq 0}a_i\right)^{1-\delta}}\sum_{j=0}^na_j + \sum_{j=0}^{n-1}\left(\frac{1}{\left(\sum\limits_{i\geq j+1}a_i\right)^{1-\delta}}-\frac{1}{\left(\sum\limits_{i\geq j}a_i\right)^{1-\delta}}\right)\sum_{k=j+1}^{n}a_k\\
&\leq & \frac{1}{\left(\sum\limits_{i\geq 0}a_i\right)^{1-\delta}}\sum_{j=0}^na_j + \sum_{j=0}^{n-1}\frac{(1-\delta)a_j}{\left(\sum\limits_{i\geq j}a_i\right)^{1-\delta}\left(\sum\limits_{i\geq j+1}a_i\right)}\sum_{k=j+1}^na_k.
\end{eqnarray*}
Now, if we let $n\rightarrow\infty$, we get
\begin{eqnarray*}
\sum_{j\geq 0}\frac{a_j}{\left(\sum\limits_{i\geq j}a_i\right)^{1-\delta}} \leq \left(\sum_{j\geq 0}a_j\right)^{\delta}+(1-\delta)\sum_{j\geq 0}\frac{a_j}{\left(\sum\limits_{i\geq j}a_i\right)^{1-\delta}}
\end{eqnarray*}
and hence
\begin{eqnarray*}
\sum_{j\geq 0}\frac{a_j}{\left(\sum\limits_{i\geq j}a_i\right)^{1-\delta}} \leq \frac{1}{\delta}\left(\sum_{j\geq 0}a_j\right)^{\delta}<\infty.
\end{eqnarray*}
\end{proof}
\section{Proof of Theorem B}
In this section, we give the proof of Theorem B. Let us begin with a weaker statement that does not require a chain condition. Thus assume only that $(X,\mu)$ is $Q$-doubling and that $(u,g)$ satisfies $Q$-Poincar\'e. Given $\epsilon>0,$ we wish to find $E_{\epsilon}\subset X$ with $H^h(E_{\epsilon})=0$ for $h(t)=\log^{-Q-\epsilon}(1/t)$ and so that the limit
\begin{equation*}
\lim_{r\rightarrow 0}\frac{1}{\mu(B(x,r))}\int_{B(x,r)}u(y)\, d\mu(y)
\end{equation*}
exists for $x$ outside $E_{\epsilon}.$\\
\indent Towards this end, it suffices to show that the sequence $\left(u_{B_j(x)}\right)_j$ of the integral averages of $u$ over the balls $B(x,2^{-j})$ is a Cauchy sequence outside such a set $E_{\epsilon}.$ Indeed, given $2^{-j-1}<r<2^{-j},$
\begin{eqnarray*}
\vert u_{B(x,r)}-u_{B(x,2^{-j})}\vert\leq \dashint_{B(x,r)}\vert u-u_{B_j(x)}\vert\leq C\dashint_{B_j(x)}\vert u-u_{B_j(x)}\vert\leq C\left(\int_{\lambda B_j(x)}g^Q\, d\mu\right)^{\frac{1}{Q}}
\end{eqnarray*}
by $Q$-doubling and $Q$-Poincar\'e. Similarly, for $l<m,$
\begin{equation*}
\vert u_{B_l(x)}-u_{B_m(x)}\vert\leq C\sum_{j=l}^{m-1}\left(\int_{\lambda B_j(x)}g^Q\right)^{\frac{1}{Q}}.
\end{equation*}
Hence, $\left(u_{B_j(x)}\right)_j$ is Cauchy provided $\int_{B(x,r)}g^Q\, d\mu\leq C\log^{-Q-\epsilon}(1/r)$ for all suffices small $r>0.$ By usual covering theorems, this holds outside a desired set.\\
\indent Towards the proof of Theorem B, we give a definition of a \textit{chain condition}, a version of which is already introduced in \cite{HK00}.
\begin{definition}\label{chain}
We say that a space $X$ satisfies a \textit{chain condition} if for every $\lambda\geq 1$ there are constants $M\geq 1,$ $0<m\leq 1$ such that for each $x\in X$ and all $0<r<\diam(X)/8$ there is a sequence of balls $B_0,B_1,B_2,\ldots$ with\\
1. $B_0\subset X\setminus B(x,r)$,\\
2. $M^{-1}\diam(B_i)\leq \dist(x,B_i)\leq M\diam(B_i)$,\\
3. $\dist(x,B_i)\leq Mr2^{-mi}$,\\
4. there is a ball $D_i\subset B_i\cap B_{i+1}$, such that $B_i\cup B_{i+1}\subset MD_i$,\\
for all $i\in\mathbb{N}\cup\{0\}$ and\\
5. no point of $X$ belongs to more than $M$ balls $\lambda B_i$.
\end{definition}
\indent The sequence $B_i$ will be called a \textit{chain associated with} $x,r$.\\

The existence of a doubling measure on $X$ does not guarantee a chain condition. In fact, such a space can be badly disconnected, whereas a space with a chain condition cannot have \lq\lq large gaps\rq\rq . For example, the standard $1/3$-Cantor set satisfies a chain condition only for $\lambda<2$. Here we show that a large number of spaces satisfy our chain condition.\\

Let $X$ be a metric space. For $0<r<R$ and $x\in X,$ we define the annulus $A(x,r,R)$ to be the set $\overline{B}(x,R)\setminus B(x,r).$ 
\begin{definition}\label{LAC}
A metric space $X$ is said to be $L$-annularly connected if whenever $y,z\in A(x,r,2r)$ for some $x\in X$ and $r>0,$ then there exists a curve joining $y$ and $z$ in $A(x,r/L,2rL).$ 
\end{definition}
Annular connectivity holds, for example, for complete doubling metric spaces that support a suitable Poincar\'e inequality \cite{HK00}, \cite{Kor07}.
\begin{lemma}
Suppose that $(X,d,\mu)$ is a doubling metric measure space, $(X,d)$ is connected and $L$-annularly connected. Then $(X,d)$ satisfies a chain condition.
\end{lemma}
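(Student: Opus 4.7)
The plan is to build, for each $x\in X$ and $0<r<\diam(X)/8$, a sequence of balls that accumulates at $x$ at a geometric rate, by using $L$-annular connectivity to traverse dyadic annuli around $x$ and doubling to keep all the relevant counts under control.

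First I would set $r_k:=r\cdot 2^{-k}$ and, using the connectedness of $X$ together with the continuity of $d(x,\cdot)$, pick an anchor $y_k$ with $d(x,y_k)=\frac{3}{2}r_k$ for each $k\ge 0$; this is possible because $\{d(x,y):y\in X\}$ is a connected subset of $[0,\infty)$ containing $0$ and values exceeding $4r$ (since $\diam(X)>8r$). Then $y_k\in A(x,r_k,2r_k)$ and $y_{k+1}\in A(x,r_k/2,r_k)$. I would then apply $L$-annular connectivity a bounded number of times, with auxiliary anchors inserted so that each application acts on two points of a common annulus of the form $A(x,s,2s)$, to connect $y_k$ to $y_{k+1}$ by a curve $\gamma_k$ contained in $A_k:=A(x,r_k/(4L),4Lr_k)$.

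Next, I would fix a parameter $\eta=\eta(\lambda,L,C_\mu)\in(0,1/(16\lambda L))$ and apply Lemma \ref{cover} to the family $\{B(p,\eta r_k):p\in\gamma_k\}$ to extract a pairwise disjoint subfamily whose fivefold enlargements still cover $\gamma_k$. Since these disjoint balls all sit inside $A_k$ and $X$ is doubling, the number $n_k$ of them is bounded by a constant $N_0=N_0(L,\eta,C_\mu)$. The connectedness of $\gamma_k$ makes the overlap graph of the enlarged balls connected, and a simple path in that graph yields a sequence $B^k_1,\ldots,B^k_{n_k}$ of balls of radius $\eta r_k$ with $y_k\in B^k_1$, $y_{k+1}\in B^k_{n_k}$, and substantial overlaps between consecutive balls in the sense that a concentric sub-ball of radius comparable to $\eta r_k$ sits inside each intersection $B^k_j\cap B^k_{j+1}$. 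I would then concatenate these per-level chains across $k$, arranging that $B^k_{n_k}$ and $B^{k+1}_1$ both contain $y_{k+1}$ so that the level transitions are also substantial overlaps, and reindex the total sequence as $\{B_i\}_{i\ge 0}$.

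Finally, I would verify the five conditions of Definition \ref{chain}. Condition 1 reduces to $d(y_0,x)=3r/2>r+\eta r$, which holds for $\eta<1/2$. Condition 2 follows from $\diam(B^k_j)\le 2\eta r_k$ together with $\dist(x,B^k_j)\in[r_k/(8L),4Lr_k]$. Condition 3 uses that the $i$-th ball sits at level $k\ge i/N_0-1$, giving $\dist(x,B_i)\le 4Lr_k\le 8Lr\cdot 2^{-i/N_0}$, so $m:=1/N_0$ works. Condition 4 is built into the construction. Condition 5 is the delicate point: a point $p\in\lambda B^k_j$ forces $d(p,x)\in[r_k/(16L),5Lr_k]$ provided $\lambda\eta\le 1/(16L)$, so only $O(\log L)$ values of $k$ can contribute, and doubling bounds the number of level-$k$ balls whose $\lambda$-dilates contain $p$. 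The main obstacle is precisely this joint tuning: $\eta$ must be small relative to $\lambda$ and $L$ for condition 5 to survive the dilation by $\lambda$, while $M$ in conditions 2--3 must be correspondingly large. A secondary subtlety is the iterative use of $L$-annular connectivity across adjacent dyadic annuli, which natively connects only two points lying in a common annulus $A(x,s,2s)$ and so has to be patched with intermediate anchor points near the shared boundary $d(\cdot,x)=r_k$.
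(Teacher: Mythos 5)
Your proposal is correct and follows essentially the same route as the paper's proof: anchor points in dyadic annuli obtained from connectedness, curves between consecutive anchors from $L$-annular connectivity, a bounded-cardinality ball cover of each curve from doubling, ordering of the balls along the curves, and a radius parameter chosen small relative to $\lambda$ and $L$ to secure the bounded-overlap condition (the paper uses $\epsilon$ where you use $\eta$, covers the annuli rather than the curves, and places anchors at $d(x,y_j)=2^{-j+1}r$ so that consecutive anchors lie in a common annulus, avoiding your intermediate-anchor patching). These are only cosmetic differences, and your verification of conditions 1--5, including $m=1/N_0$ for condition 3, matches the paper's.
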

\begin{proof}
Let $x\in X$ and $0<r<\diam(X)/8.$ Then $X\setminus\overline{B}(x,2r)\neq\emptyset.$ By connectivity, for each $j\geq 0$ there is $y_j\in X$ with $d(x,y_j)=2^{-j+1}r.$ Fix $0<\epsilon<1.$ As $\mu$ is doubling, we can cover each annulus $A_j(x)=A(x,2^{-j}r,2^{-j+1}r)$ by at most $N$ balls of radii equal to $\epsilon 2^{-j}r$ and the annulus $A(x,2r,2rL)$ by at most $N$ balls of radii equal to $\epsilon r$ with $N$ independent of $x,j.$ When $\epsilon$ is sufficiently small, depending only on $\lambda,$ the balls $2\lambda B$ with $B$ corresponding to $A_j(x)$ and $2\lambda B'$ with $B'$ corresponding to $A_i(x)$ do not intersect provided $\vert i-j\vert\geq 2.$ Since $(X,d)$ is annularly connected, we can connect the points $y_j,y_{j+1},$ $j\geq 0,$ by a curve in a wider annulus from definition \ref{LAC}. Collect all those balls from the collection above which intersect the curve joining $y_j$ and $y_{j+1},$ $j\geq 0.$ Consider the new collection of balls, where each ball chosen above gets replaced by the double of it, i.e. we replace $B(y,s)$ by $B(y,2s).$ Beginning with $y_0,$ we order our balls into a chain along the curves joining the points $y_j$ and $y_{j+1}.$ The desired properties follow, with $m=1/N$ for condition number $3.$ 
\end{proof}
Annular connectivity is not necessary for our chain condition. For example, the real line satisfies a chain condition, and so do geodesic spaces.
\begin{lemma}\label{geodesiclemma}
If $(X,d)$ is a geodesic space, then $(X,d)$ satisfies a chain condition.
\end{lemma}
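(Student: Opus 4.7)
The plan is to construct the chain by placing balls at geometrically decreasing distances from $x$ along a single geodesic ray. Fix $\lambda \geq 1$. Since $r<\diam(X)/8$, there exists $y\in X$ with $d(x,y)\geq 4r$, and by the geodesic property we can take a unit-speed geodesic $\gamma\colon [0,d(x,y)]\to X$ with $\gamma(0)=x$, $\gamma(d(x,y))=y$. I will pick two parameters $c\in(0,1/2)$ and $\alpha\in(0,1)$, depending only on $\lambda$, set $t_i:=2r\alpha^i$ and $z_i:=\gamma(t_i)$, and define
\[
B_i:=B(z_i,c\,t_i),\qquad i=0,1,2,\dots
\]
Note that $d(x,z_i)=t_i$ holds precisely because $\gamma$ is a unit-speed geodesic issuing from $x$; this is the one place geodesicity is essential.

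Next I would check the five conditions in Definition \ref{chain} one at a time. Condition 1 is immediate once $c<1/2$, since then $\rad(B_0)=2cr<r$ and $d(x,z_0)=2r$. Condition 2 follows from $\dist(x,B_i)\in[(1-c)t_i,t_i]$ combined with $\diam(B_i)\in[2ct_i-\varepsilon,2ct_i]$, the lower bound on the diameter being realized by two points $\gamma(t_i\pm s)$ on the geodesic through $z_i$ (which are available because $ct_i\leq t_i$ and $ct_i\leq d(x,y)-t_i\leq d(z_i,y)$ for all $i\geq 0$). Condition 3 is automatic with $m:=\log_2(1/\alpha)\leq 1$ (after enforcing $\alpha\geq 1/2$). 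For Condition 4, I would take $D_i:=B(z_{i+1},\delta t_i)$; one has $d(z_i,z_{i+1})=(1-\alpha)t_i$, so $D_i\subset B_i\cap B_{i+1}$ provided $\delta+(1-\alpha)<c$ and $\delta<c\alpha$, while $B_i\cup B_{i+1}\subset MD_i$ holds with $M\geq (c+1-\alpha)/\delta$.

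Condition 5 is where the two parameters really have to cooperate. If $p\in\lambda B_i$, then $d(x,p)\in[t_i(1-\lambda c),t_i(1+\lambda c)]$, so $t_i$ is forced into an interval whose ratio of endpoints is $(1+\lambda c)/(1-\lambda c)$, and since the $t_i$ form a geometric sequence with ratio $\alpha$, the number of admissible indices is at most $\lceil\log_{1/\alpha}\bigl((1+\lambda c)/(1-\lambda c)\bigr)\rceil+1$. This argument only works when $\lambda c<1$, which is the main constraint that couples $c$ to $\lambda$.

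The single point that requires care is choosing $c$ and $\alpha$ to satisfy Conditions 4 and 5 simultaneously: Condition 4 wants $c$ not too small and $\alpha$ close to $1$, while Condition 5 wants $c<1/\lambda$. Both can be met by taking, for instance, $c:=1/(2\lambda)$ and $\alpha:=1-c/2$, after which all the inequalities in Conditions 1–5 hold and $M$ can be read off as a function of $\lambda$ alone. No genuine obstacle arises: the existence of an isometric arc-length parametrization, guaranteed by the geodesic hypothesis, does all the work.
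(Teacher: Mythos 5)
Your construction is correct, and it is worth noting that the paper offers no argument of its own for this lemma: it simply defers to the proof of Lemma 8.1.6 in \cite{HKST}. Your write-up supplies the explicit chain that the citation stands in for, namely balls centred on a geodesic ray at geometrically decreasing distances $t_i=2r\alpha^i$ from $x$, with radii a fixed fraction $c$ of those distances; the verification of Conditions 1--5, including the counting argument for the bounded overlap of the dilated balls $\lambda B_i$, goes through exactly as you describe. The only point to tidy up is the consistency of your parameter choices: you stipulate $c\in(0,1/2)$ but then propose $c:=1/(2\lambda)$, which equals $1/2$ when $\lambda=1$. Either take $c:=1/(3\lambda)$ throughout, or observe that Condition 1 still holds for $c=1/2$ because the balls are open, so $d(x,p)>2r(1-c)=r$ for every $p\in B_0$. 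With that adjustment the argument is complete, and the constants $M$ and $m$ visibly depend only on $\lambda$, as Definition \ref{chain} requires.
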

Lemma \ref{geodesiclemma} follows from the proof of Lemma $8.1.6$ in \cite{HKST}.\\

By using the chain condition, the following lemma yields us the condition that we want for the proof of Theorem B.
\begin{lemma}\label{comparision}
Suppose that $X$ satisfies a \textit{chain condition} and let the sequence $B_i$ be a \textit{chain associated with} $x,R_2$ for $x\in X$ and $0<R_2<\diam(X)/8$. Let $0<R_1<R_2.$ Then we can find balls $B_{i_{R_2}},B_{i_{R_2}+1},\ldots,B_{i_{R_1}}$ from the above collection such that
\begin{eqnarray}
\frac{R_2}{M(1+M)^2} & \leq \diam(B_{i_{R_2}}) & \leq MR_2,\\
\frac{R_1}{M(1+M)^2} & \leq \diam(B_{i_{R_1}}) & \leq MR_1
\end{eqnarray}
hold and $B_{i_{R_2}} \subset B(x,R_2),$ $B_{i_{R_1}}\subset B(x,R_1)$ and also the balls $B_{i_{R_2}},B_{i_{R_2}+1},\ldots,B_{i_{R_1}}$ form a chain.
\end{lemma}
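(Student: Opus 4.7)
The plan is to read off $B_{i_{R_2}},\ldots,B_{i_{R_1}}$ as a contiguous block of the given chain $\{B_i\}_{i\ge 0}$ associated with $(x, R_2)$. For each $R\in(0,R_2]$, I will set $i_R$ to be the smallest non-negative integer satisfying $(1+M)\diam(B_{i_R})\le R$. Such an index exists because properties 2 and 3 of Definition~\ref{chain} force $\diam(B_i)\to 0$.

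Directly from the choice one has $\diam(B_{i_R})\le R/(1+M)\le MR$. The inclusion $B_0\subset X\setminus B(x,R_2)$ combined with property 2 gives $\diam(B_0)\ge \dist(x,B_0)/M\ge R_2/M$, so the defining inequality fails for $i=0$ and $i_R\ge 1$. Any $z\in B_{i_R}$ satisfies
\[
d(x,z)\le \dist(x,B_{i_R})+\diam(B_{i_R})\le (1+M)\diam(B_{i_R})\le R,
\]
which gives $B_{i_R}\subset \overline{B}(x,R)$; a slight tightening of the defining threshold produces the open inclusion $B_{i_R}\subset B(x,R)$ without affecting the constants below.

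The lower bound $\diam(B_{i_R})\ge R/(M(1+M)^2)$ is the crux. By minimality of $i_R\ge 1$ one has $\diam(B_{i_R-1})>R/(1+M)$. Property 4 supplies a ball $D_{i_R-1}\subset B_{i_R-1}\cap B_{i_R}$ with $B_{i_R-1}\subset MD_{i_R-1}$. Since $D_{i_R-1}\subset B_{i_R}$, the radius of $D_{i_R-1}$ is bounded in terms of $\diam(B_{i_R})$, and the estimate $\diam(MD)\le 2M\rad(D)$ yields a comparison of the form $\diam(B_{i_R-1})\le M(1+M)\diam(B_{i_R})$. Combining this with the minimality inequality gives the claimed lower bound.

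Finally, since $R_1\le R_2$, the monotonicity of $i_R$ yields $i_{R_2}\le i_{R_1}$, and the segment $B_{i_{R_2}},\ldots,B_{i_{R_1}}$ is a consecutive block of the parent chain, so it inherits properties 2, 4, and 5 of Definition~\ref{chain} with the same constant $M$, while property 3 passes through from the bound $\dist(x,B_{i_{R_2}+j})\le MR_2\,2^{-m(i_{R_2}+j)}$; this is the sense in which the extracted sub-sequence forms a chain. The principal technical obstacle is the diameter comparison $\diam(B_{i-1})\le M(1+M)\diam(B_i)$ from property 4, which must be argued with care in a general metric space where the diameter of a ball may differ from twice its radius.
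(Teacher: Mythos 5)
Your overall strategy matches the paper's: select the indices by a minimality criterion, prove the two-sided diameter bounds for the first selected ball by comparing it with its predecessor, and take the resulting contiguous block. (The paper defines $i_R=\min\{i : B_i\subset B(x,R)\}$ rather than your threshold $(1+M)\diam(B_{i_R})\le R$, but these are cosmetically different versions of the same idea.) However, the step you yourself flag as the crux --- the comparison $\diam(B_{i_R-1})\le M(1+M)\diam(B_{i_R})$ --- is not actually established by the route you propose, and that route cannot work. You argue that $D_{i_R-1}\subset B_{i_R}$ bounds the \emph{radius} of $D_{i_R-1}$ in terms of $\diam(B_{i_R})$, and then use $\diam(MD)\le 2M\rad(D)$. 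But containment only controls the diameter of $D_{i_R-1}$, not its radius: in a general metric space a ball of large prescribed radius can be a set of small diameter (the paper explicitly records that $\diam(B)\le 2\rad(B)$ can be strict, precisely because radius and diameter are not comparable from the needed direction). So $\rad(D_{i_R-1})$, which is what enters $MD_{i_R-1}$, is not bounded by $\diam(B_{i_R})$, and the chain of estimates breaks.

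The inequality you need is nevertheless true, and the correct derivation uses property 2 of the chain condition rather than property 4. Since $D_{i_R-1}\subset B_{i_R-1}\cap B_{i_R}$, the two balls intersect, so picking a point of the intersection gives $\dist(x,B_{i_R-1})\le \dist(x,B_{i_R})+\diam(B_{i_R})\le (1+M)\diam(B_{i_R})$ by property 2 applied to $B_{i_R}$; applying property 2 again, now to $B_{i_R-1}$, yields $\diam(B_{i_R-1})\le M\dist(x,B_{i_R-1})\le M(1+M)\diam(B_{i_R})$. Combined with your minimality bound $\diam(B_{i_R-1})>R/(1+M)$ this gives exactly the claimed lower bound $R/(M(1+M)^2)$. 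This is in substance how the paper argues. The remaining parts of your write-up (existence of $i_R$, $i_R\ge 1$, the inclusion $B_{i_R}\subset B(x,R)$ modulo the open/closed ball adjustment, monotonicity $i_{R_2}\le i_{R_1}$, and the inheritance of the chain properties by a consecutive block) are fine.
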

\begin{proof}
Let $i_{R_2}=\min\{i\geq 0: B_i\subset B(x,R_2)\}.$ Hence we have $\dist(x,B_{i_{R_2}})\leq R_2,$ which implies that $\diam(B_{i_{R_2}})\leq MR_2,$  using the second condition of the above definition. Again $B_{i_{R_2}-1}\cap\left(X\setminus B(x,R_2)\right)\neq\emptyset.$ Using the triangle inequality, we obtain $\dist(x,B_{i_{R_2}-1})+\diam(B_{i_{R_2}-1})\geq R_2$ and hence we have $$\diam(B_{i_{R_2}-1})\geq\frac{R_2}{1+M}.$$
Since $B_{i_{R_2}}\cap B_{i_{R_2}-1}\neq\emptyset,$ we write $\dist(x,B_{i_{R_2}})+\diam(B_{i_{R_2}})\geq\dist(x,B_{i_{R_2}-1})$ and hence we obtain
$$\diam(B_{i_{R_2}})\geq\frac{R_2}{M(1+M)^2}.$$
Once $B_{i_{R_2}}$ is chosen, we can choose $B_{i_{R_2}+1},B_{i_{R_2}+2},\ldots,B_{i_{R_1}}$ from the above collection, where $i_{R_1}=\min\{i\geq 0: B_i\subset B(x,R_1)\}.$ Then obtain the above estimates for $B_{i_{R_1}}$ in a similar way and clearly the new collection of balls form a chain.
\end{proof}
Our next lemma shows that we have an upper bound for the volume quotient in \eqref{1} under the chain condition.
\begin{lemma}\label{measure}
Suppose that a doubling metric measure space $(X,d,\mu)$ satisfies a chain condition. Then there is an exponent $\tilde{Q}>0$ and a constant $C_0\geq 1$ so that
\begin{equation}\label{UP}
\frac{\mu(B(x,s))}{\mu(B(a,r))}\leq C_0\left(\frac{s}{r}\right)^{\tilde{Q}}
\end{equation}
holds whenever $a\in X$, $x\in B(a,r)$ and $0<s\leq r.$
\end{lemma}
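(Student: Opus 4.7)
My plan is to prove the concentric reverse-doubling estimate $\mu(B(x,s))/\mu(B(x,r)) \le C(s/r)^{\tilde Q}$ by iterating a one-step contraction supplied by the chain condition, and then to transfer it to the off-centre statement via a single application of doubling. The transfer is easy: since $x\in B(a,r)$ we have $B(x,r)\subset B(a,2r)$, so doubling yields $\mu(B(x,r))\le C_{\mu}^{2}\mu(B(a,r))$, which is enough to absorb the prefactor into $C_{0}$. The case $r\ge \diam(X)/8$, possible only when $\diam(X)<\infty$, is dispatched separately by running the concentric argument with $r$ replaced by a fixed multiple of $\diam(X)$: doubling makes $\mu(B(a,r))$, $\mu(X)$, and $\mu(B(x,\diam(X)/16))$ pairwise comparable, after which one checks the claim on each of the subranges $s\le \diam(X)/16$ and $s>\diam(X)/16$ with the resulting universal ratio absorbed into $C_{0}$.

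For the one-step contraction, I fix $r<\diam(X)/8$ and take a chain $\{B_{i}\}$ associated with $x$ and $r$. Applying Lemma \ref{comparision} with $R_{2}=r$ produces a ball $B_{\ast}:=B_{i_{R_{2}}}$ with $B_{\ast}\subset B(x,r)$ and $\diam(B_{\ast})\ge r/(M(1+M)^{2})$; condition~(2) of Definition \ref{chain} then forces $\dist(x,B_{\ast})\ge M^{-1}\diam(B_{\ast})\ge r/(M^{2}(1+M)^{2})$. Setting $L:=M^{2}(1+M)^{2}$ makes $B_{\ast}$ disjoint from $B(x,r/L)$, so
$$\mu(B(x,r/L))+\mu(B_{\ast})\le \mu(B(x,r)).$$
Writing $B_{\ast}=B(y_{\ast},r_{\ast})$, the bounds $r_{\ast}\gtrsim r$ and $d(x,y_{\ast})\le \dist(x,B_{\ast})+r_{\ast}\lesssim r$ show $B(x,r)\subset \lambda_{0}B_{\ast}$ for an absolute constant $\lambda_{0}$, so doubling yields $\mu(B(x,r))\le C_{1}\mu(B_{\ast})$. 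Combining these two displays gives the desired contraction
$$\mu(B(x,r/L))\le (1-\eta)\mu(B(x,r)),\qquad \eta:=C_{1}^{-1}\in(0,1).$$

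Since every scale $rL^{-k}$ remains below $\diam(X)/8$, this step iterates to $\mu(B(x,rL^{-k}))\le (1-\eta)^{k}\mu(B(x,r))$ for all $k\ge 0$. For a general $0<s\le r$ I choose $k$ with $rL^{-k-1}<s\le rL^{-k}$ and use monotonicity to close the argument with $\tilde Q:=\log(1/(1-\eta))/\log L$. The main obstacle is the one-step contraction itself: the chain ball $B_{\ast}$ must be simultaneously disjoint from $B(x,r/L)$ and carry a fixed positive fraction of $\mu(B(x,r))$. Both conditions rely on the two-sided comparison $\diam(B_{\ast})\approx r$ furnished by Lemma \ref{comparision}; without its lower bound the chain could thin out near $\partial B(x,r)$, the measure comparison $\mu(B_{\ast})\gtrsim \mu(B(x,r))$ would fail, and the iteration would yield only a logarithmic rather than a polynomial rate.
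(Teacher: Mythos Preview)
Your proof is correct and follows essentially the same route as the paper: use the chain condition to locate a sub-ball $\tilde B\subset B(x,r)$ disjoint from a fixed contraction $\tau B(x,r)$, compare $\mu(\tilde B)$ with $\mu(B(x,r))$ via doubling to obtain a one-step reverse-doubling inequality, and then iterate. Your write-up is more explicit than the paper's sketch---you invoke Lemma~\ref{comparision} to produce $\tilde B=B_\ast$ with quantified diameter and separation, and you spell out the off-centre reduction and the bounded-diameter edge case---but the underlying mechanism is identical.
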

\begin{proof}
Let $B$ be an arbitrary ball in $X.$ We choose $\tau<1/2$ such that we get a ball $\tilde{B}\subset B$ disjoint from $\tau B$ using the chain condition and hence using the doubling property we obtain
\begin{eqnarray*}
\mu(B) &\geq & \mu(\tau B)+\mu(\tilde{B})\\
&\geq & \mu(\tau B)+C_{\mu}\,\mu(B),
\end{eqnarray*}
which means that we have the \lq\lq reverse\rq\rq\ doubling condition
\begin{equation*}
\mu(\tau B)\leq (1-C_{\mu})\mu(B).
\end{equation*}
Then a simple iteration argument gives us the required condition. 
\end{proof}
It immediately follows from Lemma \ref{measure} that $H^h(E_\epsilon)=0$ implies, in the setting of Theorem B, that $\mu(E_\epsilon)=0.$ Hence the conclusion of Theorem B has content.\\

\begin{proof}[\textbf{Proof of Theorem B}] Let $x\in X$. For given $0<r<1$, we can always find $j\in\mathbb{N}$ such that $2^{-(j+1)}<r<2^{-j}$. It is enough to consider the balls $B(x,2^{-j})$ instead of $B(x,r)$ as we have, using the doubling property and the Poincar\'e inequality,
\begin{eqnarray*}
\vert u_{B(x,r)}-u_{B(x,2^{-j})}\vert & \leq & \dashint_{B(x,r)}\vert u-u_{B(x,2^{-j})}\vert\, d\mu\\
& \leq & c\dashint_{B(x,2^{-j})}\vert u-u_{B(x,2^{-j})}\vert\, d\mu\\
& \leq & c\left(\int_{B(x,2^{-j})}g^Q\, d\mu\right)^{\frac{1}{Q}}\rightarrow 0 ~\text{as}~ j\rightarrow\infty.
\end{eqnarray*}
Our aim is to show that the sequence $u_{B(x,2^{-j})}=\dashint_{B(x,2^{-j})}u(y)\, d\mu(y)$, $j\in\mathbb{N}$ is a Cauchy sequence. Towards this end, for $m,l\in\mathbb{N},m>l$, let us consider the difference
\begin{equation*}
\vert u_{B(x,2^{-m})}-u_{B(x,2^{-l})}\vert\leq\vert u_{B(x,2^{-l})}-u_{B_{i_l}}\vert +\vert u_{B(x,2^{-m})}-u_{B_{i_m}}\vert +\vert u_{B_{i_l}}-u_{B_{i_m}}\vert ,
\end{equation*}
where the balls $B_{i_l},B_{i_l+1},\ldots,B_{i_m}$ are obtained from Lemma \ref{comparision} for $R_1=2^{-m},R_2=2^{-l}$.
Using the doubling property, Poincar\'e inequality and Lemma \ref{comparision}, we obtain
\begin{eqnarray*}
\vert u_{B_{i_l}}-u_{B(x,2^{-l})}\vert & \leq & \dashint_{B_{i_l}}\vert u-u_{B(x,2^{-l})}\vert\, d\mu\\
& \leq & c\dashint_{B(x,2^{-l})}\vert u-u_{B(x,2^{-l})}\vert\, d\mu\\
& \leq & c\left(\int_{B(x,2^{-l})}g^Q\, d\mu\right)^{\frac{1}{Q}}\rightarrow 0 ~\text{as}~ l\rightarrow\infty
\end{eqnarray*}
and similarly we get $\vert u_{B(x,2^{-m})}-u_{B_{i_m}}\vert\rightarrow 0 ~\text{as}~ m\rightarrow\infty.$ So, it is enough to prove that $\vert u_{B_{i_l}}-u_{B_{i_m}}\vert\rightarrow 0$ when both $m,l$ tend to infinity.\\
\indent Fix $\epsilon>0$ and write $h_1(t)=\log^{1-Q-\frac{\epsilon}{2}}(1/t).$ Let $\tilde{\epsilon}>0,$ which is to be chosen later.  We use a telescopic argument for the balls $B_{i_l},B_{i_l+1},\ldots,B_{i_m}$ and also use chain conditions, relative lower volume decay \eqref{1} and Poincar\'e inequality \eqref{PI} to estimate
\begin{eqnarray*}
\vert u_{B_{i_m}}-u_{B_{i_l}}\vert & \leq & \sum_{n=i_l}^{i_m-1}\vert u_{B_n}-u_{B_{n+1}}\vert\\
&\leq & \sum_{n=i_l}^{i_m-1}\left(\vert u_{B_n}-u_{D_n}\vert+\vert u_{B_{n+1}}-u_{D_n}\vert\right)\\
&\leq & \sum_{n=i_l}^{i_m-1}\left(\dashint_{D_n}\vert u-u_{B_n}\vert\, d\mu +\dashint_{D_n}\vert u-u_{B_{n+1}}\vert\, d\mu\right)\\
& \leq & c\sum_{n=i_l}^{i_m-1}\dashint_{B_n}\vert u-u_{B_n}\vert\, d\mu\\
& \leq & c\sum_{n=i_l}^{i_m-1}\diam(B_n)\left(\dashint_{\lambda B_n}g^Q\, d\mu\right)^{\frac{1}{Q}}\\
& \leq & c\sum_{n\geq i_l}\left(\frac{\diam(B_n)^Q}{\mu(B_n)}\int_{\lambda B_n}g^Q\, d\mu\right)^{\frac{1}{Q}}n^{\frac{Q-1+\tilde{\epsilon}}{Q}}n^{-\frac{Q-1+\tilde{\epsilon}}{Q}}\\
& \leq & c\left(\sum_{n\geq i_l}\frac{\diam(B_n)^Q}{\mu(B_n)}n^{Q-1+\tilde{\epsilon}}\int_{\lambda B_n}g^Q\, d\mu\right)^{\frac{1}{Q}} \left(\sum_{n\geq i_l}n^{-\frac{Q-1+\tilde{\epsilon}}{Q-1}}\right)^{\frac{Q-1}{Q}}\\
& \leq & \frac{ci_l^{-\frac{\tilde{\epsilon}}{Q}}}{\mu(B(x,1))}\left(\sum_{n\geq i_l}n^{Q-1+\tilde{\epsilon}}\int_{\lambda B_n}g^Q\, d\mu\right)^{\frac{1}{Q}}.
\end{eqnarray*}
Now we consider the convergence of the sum $\sum_{n\geq i_l}n^{Q-1+\tilde{\epsilon}}\int_{\lambda B_n}g^Q\, d\mu$ when $l$ is large. If we have
\begin{equation}\label{2}
\int_{B(x,r)}g^Q\, d\mu\leq \frac{c}{\log^{Q-1+\epsilon/2}\left(\frac{1}{r}\right)}
\end{equation}
for all sufficiently small $0<r<1/5,$ then
$$\sum_{n'\geq n}\int_{\lambda B_{n'}}g^Q\, d\mu\leq \frac{c}{n^{Q-1+\epsilon/2}}$$
for all $n\geq i_l,$ provided $l$ is sufficiently large. Then we choose $\tilde{\epsilon}=\frac{\epsilon}{2}-\delta(Q-1+\frac{\epsilon}{2})$ for some $0<\delta<1$ (we can choose $\delta$ as small as we want to make $\tilde{\epsilon}$ positive) and use Lemma \ref{3} to obtain
$$\sum_{n\geq i_l}n^{Q-1+\tilde{\epsilon}}\int_{\lambda B_n}g^Q\, d\mu<\infty.$$
Hence we get $\vert u_{B_{i_m}}-u_{B_{i_l}}\vert\rightarrow 0$ when both $l,m$ tend to infinity.\\
\indent On the other hand, let us consider the set
\begin{multline*}
E_\epsilon=\bigg\{x\in X :\text{there exists arbitrarily small}~0<r_x<\frac{1}{5}~\text{such that}\\ \int_{B(x,r_x)}g^Q\, d\mu\geq\frac{c}{\log^{Q-1+\epsilon/2}\left(\frac{1}{r_x}\right)}\bigg\}.
\end{multline*}
Let $0<\delta_1<1/5.$ Then we get a pairwise disjoint family $\mathcal{G},$ by the using 5B-covering lemma, such that
$$E_{\epsilon}\subset\bigcup_{B\in\mathcal{G}}5B,$$
where $\diam(B)<2\delta_1$ for $B\in\mathcal{G}.$ Then we estimate
\begin{eqnarray*}
\mathcal{H}_{10\delta_1}^{h_1}(E_{\epsilon}) & \leq & \sum_{B\in\mathcal{G}} \log^{1-Q-\frac{\epsilon}{2}}\left(\frac{1}{5 \rad(B)}\right)\\
& \leq & c\sum_{B\in\mathcal{G}} \log^{1-Q-\frac{\epsilon}{2}}\left(\frac{1}{\rad(B)}\right)\\
& \leq & c\sum_{B\in\mathcal{G}} \int_{B}g^Q\, d\mu\\
& \leq & c\int_{\bigcup\limits_{B\in\mathcal{G}}B}g^Q\, d\mu<\infty.
\end{eqnarray*}
It follows that $\mathcal{H}^{h_1}(E_{\epsilon})<\infty$ and hence we have that $\mathcal{H}^h(E_{\epsilon})=0$ (see \cite[Theorem 40]{Rog98}), which gives us the existence of $ \lim\limits_{i\rightarrow\infty}\dashint_{B_i}u(y)\, d\mu(y)$ for $\mathcal{H}^h$-a.e. $x\in X.$ Since $u$ is locally integrable, $\mu$-almost every $x$ is a Lebesgue point, and hence \eqref{leb} extends to hold $H^h$-a.e. for a representative of $u.$ 
\end{proof}
\begin{remark}
The proof of Theorem B actually only requires a chain condition for the value of $\lambda$ given in our assumption \eqref{PI} on the pair $(u,g).$
\end{remark}
\section{Appendix}
In this section, we complete the proof of Theorem A. First we recall the definition of maximal functions and a version of the well-known maximal theorem of Hardy, Littlewood and Wiener.\\
\indent Let $(X,d,\mu)$ be a metric measure space. The \textit{Hardy-Littlewood maximal function} $Mf$ of a locally integrable function $f$ is the function defined by
\begin{equation}
Mf(x):=\sup_{r>0}\dashint_{B(x,r)}\vert f(y)\vert\, d\mu(y)
\end{equation}
and the \textit{restricted maximal function} is defined by
\begin{equation}
M_Rf(x):=\sup_{0<r<R}\dashint_{B(x,r)}\vert f(y)\vert\, d\mu(y)
\end{equation}
for $R>0$ fixed.\\
Here we only state the Maximal theorem, for a proof see \cite{Smi56}, \cite{Rau56} or \cite{Hei01}.
\begin{theorem}[Maximal theorem]\label{Mt}
Let X be a doubling metric measure space. There exist constants $C_p,$ depending only on $p$ and on the doubling constant of $\mu,$ such that
\begin{equation}
\mu(\{x:Mf(x)>t\})\leq \frac{C_1}{t}\Vert f\Vert_{L^1(X)}
\end{equation}
for all $t>0$ and that
\begin{equation}
\Vert Mf\Vert_{L^p(X)}\leq C_p\Vert f\Vert_{L^p(X)}
\end{equation}
for all $1<p\leq\infty$ and for all measurable functions $f.$
\end{theorem}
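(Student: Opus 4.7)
The plan is the classical three-step strategy on a doubling metric measure space: establish the weak-$(1,1)$ inequality via the 5B-covering lemma (Lemma \ref{cover}) together with the doubling property of $\mu$, note the trivial $L^{\infty}\to L^{\infty}$ bound, and then interpolate (Marcinkiewicz) to pass to the strong $L^{p}$ bound for $1<p<\infty$.

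For the weak-type estimate, fix $f\in L^1(X)$ and $t>0$ and let $E_t=\{x:Mf(x)>t\}$. Note that $E_t$ is open because $Mf$ is lower semicontinuous, so the measure makes sense. By definition, to each $x\in E_t$ one associates a ball $B_x=B(x,r_x)$ satisfying $t\,\mu(B_x)<\int_{B_x}|f|\,d\mu$. I would first arrange that the radii are uniformly bounded (either automatically, by exploiting that the above inequality forces $\mu(B_x)\le\|f\|_{L^1}/t$ together with the volume decay \eqref{1}, or by working with $M_Rf$ and sending $R\to\infty$ at the end). Lemma \ref{cover} then yields a pairwise disjoint subfamily $\mathcal G\subset\{B_x\}$ with $E_t\subset\bigcup_{B\in\mathcal G}5B$, and doubling gives $\mu(5B)\leq C_\mu^3\mu(B)$; consequently
$$\mu(E_t)\leq\sum_{B\in\mathcal G}\mu(5B)\leq C_\mu^3\sum_{B\in\mathcal G}\mu(B)\leq\frac{C_\mu^3}{t}\sum_{B\in\mathcal G}\int_B|f|\,d\mu\leq\frac{C_\mu^3}{t}\,\|f\|_{L^1(X)},$$
where the last inequality uses disjointness of $\mathcal G$. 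This gives the weak-type bound with $C_1=C_\mu^3$.

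The $L^{\infty}$ estimate $\|Mf\|_{L^\infty}\leq\|f\|_{L^\infty}$ is immediate from the definition, since every integral average of $|f|$ is dominated by $\|f\|_{L^\infty}$. For the intermediate range $1<p<\infty$ I would run the Marcinkiewicz interpolation argument directly: for each $t>0$ decompose $f=f_1+f_2$ with $f_1=f\chi_{\{|f|>t/2\}}\in L^1(X)$ and $f_2=f\chi_{\{|f|\leq t/2\}}\in L^\infty(X)$; since $Mf_2\leq t/2$ pointwise, one has $\{Mf>t\}\subset\{Mf_1>t/2\}$, and applying the weak-$(1,1)$ bound to $f_1$ gives $\mu(\{Mf>t\})\leq (2C_1/t)\int_{\{|f|>t/2\}}|f|\,d\mu$. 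Plugging this into the layer-cake identity
$$\|Mf\|_{L^p(X)}^p=p\int_0^\infty t^{p-1}\mu(\{Mf>t\})\,dt$$
and interchanging the order of integration by Fubini produces $\|Mf\|_{L^p(X)}\leq C_p\|f\|_{L^p(X)}$, with $C_p$ depending only on $p$ and the doubling constant $C_\mu$.

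The main technical wrinkle is the truncation step ensuring that the family of selected balls has uniformly bounded radii so that Lemma \ref{cover} applies; this is standard but has to be said. Everything else is textbook, with all the real work done by the 5B-covering lemma and the doubling property.
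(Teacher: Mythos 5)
Your proof is correct. The paper itself does not prove this theorem; it only states it and refers to the literature (\cite{Smi56}, \cite{Rau56}, \cite{Hei01}), and your argument --- weak-$(1,1)$ via Lemma \ref{cover} plus doubling, the trivial $L^\infty$ bound, and Marcinkiewicz interpolation with the truncation $f=f\chi_{\{|f|>t/2\}}+f\chi_{\{|f|\le t/2\}}$ --- is exactly the standard proof found in those references. You also correctly flag the one genuine technical point, namely that the covering lemma requires uniformly bounded radii, which is handled by working with $M_R$ and letting $R\to\infty$.
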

We also recall here the Haj\l asz-Sobolev space $M^{1,p}(X)$ defined by Haj\l asz in \cite{Haj96}. A measurable function $u : X\rightarrow\mathbb{R}$ belongs to the Haj\l asz-Sobolev space $M^{1,p}(X)$ if and only if $u\in L^p(X)$ and there exists a nonnegative function $g\in L^p(X)$ such that the inequality
\begin{equation*}
\vert u(x)-u(y)\vert\leq d(x,y)(g(x)+g(y))
\end{equation*}
holds for all $x,y\in X\setminus E,$ where $\mu(E)=0.$\\
The following theorem completes the sketch of the proof of Theorem A from our introduction.
\begin{theorem}
Suppose that $(X,d,\mu)$ is a complete and doubling space that supports a $Q$-Poincar\'e inequality. Let $u\in W^{1,Q}(X)$ and $g$ be its upper gradient. Then there exists a function $h\in L^Q(X)$ such that the inequality
\begin{equation}\label{SPI}
\dashint_B\vert u-u_B\vert\leq Cr\dashint_Bh\, d\mu
\end{equation}
holds on every ball $B$ of radius $r$ and that $\Vert h\Vert_Q\leq c\Vert g\Vert_Q.$
\end{theorem}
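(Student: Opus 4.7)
The plan is to reduce to a $p$-Poincar\'e inequality with $p<Q$ via the self-improvement theorem of Keith and Zhong, and then build $h$ as the $p$-th root of the Hardy--Littlewood maximal function of $g^p$. Since $(X,d,\mu)$ is complete, doubling, and supports a $Q$-Poincar\'e inequality with $Q>1$, \cite{KZ08} yields some $1\le p<Q$ and a dilation $\lambda\ge 1$ (possibly larger than the one in \eqref{PI}) such that, for every ball $B$ of radius $r$ and every pair $(u,g)$ with $g$ an upper gradient of $u$,
\begin{equation*}
\dashint_B |u-u_B|\, d\mu \le C\, r\left(\dashint_{\lambda B} g^p\, d\mu\right)^{1/p}.
\end{equation*}
This is the only deep ingredient; the rest is a maximal-function argument.

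Next, I would set $h(x):=\bigl(M(g^p)(x)\bigr)^{1/p}$, where $M$ is the Hardy--Littlewood maximal operator on $(X,d,\mu)$. Because $g\in L^Q(X)$ and $Q/p>1$, Theorem \ref{Mt} applied to $g^p\in L^{Q/p}(X)$ gives $M(g^p)\in L^{Q/p}(X)$ with
\begin{equation*}
\|h\|_{L^Q(X)}^Q = \|M(g^p)\|_{L^{Q/p}(X)}^{Q/p} \le C_{Q/p}\,\|g^p\|_{L^{Q/p}(X)}^{Q/p} = C_{Q/p}\,\|g\|_{L^Q(X)}^{Q},
\end{equation*}
so $h\in L^Q(X)$ and $\|h\|_Q\le c\|g\|_Q$ with $c$ depending only on $Q$, $p$, and the doubling constant.

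To verify \eqref{SPI}, fix a ball $B=B(y,r)$ and any $x\in B$. Then $\lambda B\subset B(x,(\lambda+1)r)$, so by doubling
\begin{equation*}
\dashint_{\lambda B}g^p\,d\mu \le C\dashint_{B(x,(\lambda+1)r)}g^p\,d\mu \le C\,M(g^p)(x)=C\,h(x)^p.
\end{equation*}
Taking $p$-th roots and then integrating in $x$ over $B$ gives $\bigl(\dashint_{\lambda B}g^p\,d\mu\bigr)^{1/p}\le C\dashint_B h\,d\mu$, and combining this with the $p$-Poincar\'e inequality above yields
\begin{equation*}
\dashint_B |u-u_B|\, d\mu \le C\,r\dashint_B h\, d\mu,
\end{equation*}
as required.

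The main obstacle is the application of the Keith--Zhong self-improvement theorem \cite{KZ08}: it is the one nonelementary input, and it is precisely where completeness of $X$ and the assumption that the $Q$-Poincar\'e inequality holds for \emph{all} pairs $(u,g)$ with $g$ an upper gradient are used (see the discussion following Theorem A). Everything else -- the maximal-theoretic bound on $\|h\|_Q$ and the pointwise domination of $(\dashint_{\lambda B}g^p)^{1/p}$ by $h(x)$ for $x\in B$ -- is a routine consequence of doubling and the strong $L^{Q/p}$-bound from Theorem \ref{Mt}.
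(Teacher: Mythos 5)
Your proposal is correct, and it shares the two essential ingredients with the paper's proof: the Keith--Zhong self-improvement from \cite{KZ08} to pass from the $Q$-Poincar\'e inequality to a $p$-Poincar\'e inequality with $p<Q$, and the strong $L^{Q/p}$ maximal function bound to control $\Vert h\Vert_Q$ (indeed your $h=(M(g^p))^{1/p}$ is, up to the restricted-versus-unrestricted distinction, the same function the paper uses with $p=Q-\epsilon$). Where you diverge is the middle step. The paper routes the argument through Theorems 3.1 and 3.2 of \cite{HK00}: it first upgrades $(u,g)$ to a Haj\l asz-type pointwise inequality $\vert u(x)-u(y)\vert\leq Cd(x,y)(h(x)+h(y))$ and then integrates that inequality over $B\times B$ to obtain \eqref{SPI}. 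You instead dominate the right-hand side of the improved Poincar\'e inequality directly: for every $x\in B$ one has $\lambda B\subset B(x,(\lambda+1)r)$, so doubling gives $\bigl(\dashint_{\lambda B}g^p\,d\mu\bigr)^{1/p}\leq C h(x)$ pointwise on $B$, and averaging in $x$ yields $\bigl(\dashint_{\lambda B}g^p\,d\mu\bigr)^{1/p}\leq C\dashint_B h\,d\mu$. Your route is more self-contained and elementary, needing only doubling and Theorem \ref{Mt} beyond the Keith--Zhong input, and it absorbs the dilation $\lambda$ cleanly; the paper's route imports the machinery of \cite{HK00} but produces, as a byproduct, the stronger pointwise (Haj\l asz) characterization $u\in M^{1,Q}(X)$, which is of independent interest. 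Both arguments use completeness and the validity of the $Q$-Poincar\'e inequality for all pairs only through \cite{KZ08}, exactly as you note.
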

\begin{proof}
By \cite{KZ08}, we know that there exists $\epsilon>0$ such that a $(Q-\epsilon)$-Poincar\'e inequality holds for the pair $(u,g).$ Then Theorem 3.1 of \cite{HK00} shows that $u\in M^{1,Q}(X)$ and in particular, Theorem 3.2 of \cite{HK00} gives us the pointwise inequality
\begin{equation}\label{PoI}
\vert u(x)-u(y)\vert\leq Cd(x,y)\left(h(x)+h(y)\right)
\end{equation}
for almost every $x,y\in X,$ where $h(x)=(M_{2\lambda d(x,y)}g^{Q-\epsilon}(x))^{\frac{1}{Q-\epsilon}}.$ Now integrating inequality \eqref{PoI} over a ball $B$ with respect to $x$ and $y$, we obtain inequality \eqref{SPI} and using Maximal Theorem \ref{Mt} we obtain
\begin{equation*}
\Vert h\Vert_Q=\Vert Mg^{Q-\epsilon}\Vert_{\frac{Q}{Q-\epsilon}}^{\frac{1}{Q-\epsilon}}\leq c\Vert g^{Q-\epsilon}\Vert_{\frac{Q}{Q-\epsilon}}^{\frac{1}{Q-\epsilon}}=c\Vert g\Vert_Q.
\end{equation*}
Note that we have used the fact that $g^{Q-\epsilon}\in L^{\frac{Q}{Q-\epsilon}},$ $\frac{Q}{Q-\epsilon}>1$ and that
\begin{equation*}
\left(M_{2\lambda d(x,y)}g^{Q-\epsilon}(x)\right)^{\frac{1}{Q-\epsilon}}\leq \left(Mg^{Q-\epsilon}(x)\right)^{\frac{1}{Q-\epsilon}}.
\end{equation*}
\end{proof}
\def\bibname{References}
\bibliography{lebesguepoint}
\bibliographystyle{alpha}
\end{document}